\numberwithin{equation}{section}
\newtheorem{defn}{Definition}[section]
\newtheorem{theorem}{Theorem}[section]
\newtheorem{lemma}[theorem]{Lemma}
\newtheorem{remark}[theorem]{Remark}
\def \begineq{\begin{equation}}
\def \endeq{\end{equation}}
\def \bb{\mathbb}
\def \CC{{\bb{C}}}
\def \QQ{{\bb{Q}}}
\def \RR{{\bb{R}}}
\def \ZZ{{\bb{Z}}}
\def \({\left(}
\def \){\right)}
\def \<{\langle}
\def \>{\rangle}
\def \bar{\overline}
\begin{document}
\title[Mckay corespondence in Quasitoric orbifolds]{ Mckay corespondence in  Quasitoric orbifolds}
\author[Saibal Ganguli]{Saibal Ganguli}
\address{ Departamento de
Matem\'aticas, Universidad de los Andes, Bogota, Colombia}
\email{saibalgan@gmail.com}

\subjclass[2010]{Primary 55N32; Secondary 53C99, 52B20}

\abstract
{ We show Mckay correspondence of Betti numbers of Chen-Ruan cohomology for omnioriented quasitoric orbifolds. In previous articles with M. Poddar \cite{[GP]}, \cite{[GP2]}, we proved the correspondence for four dimension and six dimensions. Here we deal with the general case.}

\endabstract
\maketitle
\section{\bf{Quasitoric orbifolds } }\label{smooth}

 In this section we
review the combinatorial construction  of quasitoric
orbifolds. We also  construct an explicit
orbifold atlas for them and list a few important properties.
 The notations established here will be important for
the rest of the article.This material has been taken from \cite{[GP2]}

\subsection{\bf {Construction} }   Fix a copy $N$ of $\ZZ^n$ and let $ T_N := (N \otimes_{\ZZ} \RR) / N \cong \RR^n/ N $
 be the corresponding $n$-dimensional torus. A primitive vector in $N$, modulo sign,
  corresponds to a circle subgroup
 of $T_N$. More generally, suppose $M$ is a submodule of $N$ of rank $m$. Then
 \begin{equation}\label{TM} T_M := (M \otimes_{\ZZ} \RR) /M \end{equation}
  is a torus of dimension $m$.
  Moreover there is a natural homomorphism of Lie
 groups $\xi_M: T_M \to T_N$ induced by the inclusion $M \hookrightarrow N$.
\begin{defn}\label{tlambda}  Define T(M) to be the image
  of $T_M$ under $\xi_M$.
 If $M$ is generated  by a vector $\lambda \in N$, denote $T_M$
 and $T(M)$ by $T_{\lambda}$ and $T(\lambda)$ respectively.
\end{defn}

Usually a polytope is defined to be the convex hull of a finite
set of points in $\RR^n$. To keep our notation manageable, we will
take a more liberal interpretation of the term polytope.
\begin{defn}\label{polytope} A polytope $P$ will denote a subset of $\RR^n$ which
is diffeomorphic, as manifold with corners, to the convex hull $Q$
of a finite number of points in $\RR^n$. Faces of $P$ are the
images of the faces of $Q$ under the diffeomorphism.
\end{defn}

Let $P$ be a simple polytope in $\RR^n$, i.e. every vertex of $P$
is the intersection of exactly $n$ codimension one faces (facets).
 Consequently every $k$-dimensional face $F$ of $P$ is the intersection of
 a unique collection of $n-k$ facets.
 Let $ \mathcal{F}:= \{F_1,\ldots, F_m\}$ be the
set of facets of $P$.

\begin{defn}\label{charfn}
 A function $\Lambda: \mathcal{F} \to N $
  is called a characteristic function for $P$ if
 $ \Lambda(F_{i_1}), \ldots, \Lambda(F_{i_k})$ are linearly independent whenever
 $F_{i_1}, \ldots, F_{i_k}$intersect at a face in $P$. We write $\lambda_i$
 for $\Lambda(F_i)$ and call it a characteristic vector.
\end{defn}

\begin{remark}\label{primi} In this article we  assume that all characteristic
 vectors are primitive.
Corresponding quasitoric orbifolds have been termed primitive
quasitoric orbifold in \cite{[PS]}. They are characterized by the
codimension of singular locus being greater than or equal to four.
\end{remark}

 \begin{defn}\label{NF}
 For any face $F$ of $P$, let $\mathcal{I}(F) = \{ i  | F \subset F_i  \}$.
 Let $\Lambda$ be a characteristic function for P.
  The set $ \lambda_F :=\{ \lambda_i:  i \in \mathcal{I}(F) \} $ is called the characteristic set of $F$.
 Let $N(F)$ be the submodule of $N$ generated
by $\lambda_F$. Note that $\mathcal{I}(P) $ is empty  and
$N(P) = \{0\}$. \end{defn}

  For any point $p \in P$, denote by $F(p)$
 the face of  $P$ whose relative interior contains $p$. Define an equivalence relation $\sim$ on the space
 $P \times T_N$ by
 \begin{equation} \label{defequi}
 (p,t) \sim (q,s) \; {\rm if \; and\; only\; if\;} p=q \; {\rm and} \; s^{-1}t \in T(N({F(p)}))
 \end{equation}
 Then the quotient space $X :=P \times T_N/ \sim$ can be given the structure of a $2n$-dimensional
   orbifold. We refer to the pair $(P,\Lambda)$ as a model for the
  quasitoric orbifold. The space $X$ inherits an action of $T_N$ with orbit space
   $P$ from the natural action
  on $P \times T_N$. Let $\pi: X \to P$ be the associated quotient map.

   The space $X$ is a manifold if the characteristic vectors $ \lambda_{i_1}, \ldots, \lambda_{i_k}$
   generate a unimodular subspace of $N$ whenever the facets $F_{i_1}, \ldots, F_{i_k}$ intersect.
    The points $\pi^{-1}(v) \in X$, where $v$ is any vertex of $P$,
are fixed by the action of $T_N$. For simplicity we will denote
the point $\pi^{-1}(v)$  by $v$ when there is no confusion.

\subsection{\bf{ Orbifold charts}}\label{diffs}
Consider open
 neighborhoods $U_v \subset P$ of the vertices $v$  such that
  $U_v$ is the complement in $P$ of all edges
 that do not contain $v$.
 Let
 \begin{equation}
X_v := \pi^{-1}(U_v) =  U_v \times T_N / \sim
\end{equation}
For a face $F$ of $P$ containing $v$ there is a natural inclusion of
$N(F)$ in $N(v)$.
  It induces an injective homomorphism $T_{N(F)} \to T_{N(v)}$ since a basis of $N(F)$ extends
  to a basis of $N(v)$. We will regard $T_{N(F)}$ as a
  subgroup of $T_{N(v)}$ without confusion.
 Define an equivalence relation $\sim_v$ on $U_v \times T_{N(v)}$ by
$(p,t)\sim_v (q,s)$ if
 $p=q$ and $s^{-1}t \in T_{N(F)}$ where $F$ is the face whose relative interior contains $p$.
Then the space
\begin{equation}
 \widetilde{X}_v:= U_v \times T_{N(v)}/ \sim_v
\end{equation}
is $\theta$-equivariantly diffeomorphic to an open set in $\CC^n$,
where  $\theta: T_{N(v)} \to U(1)^n$ is an isomorphism, see \cite{[DJ]}. There
     exists a diffeomorphism $f: \widetilde{X}_v \to B \subset \CC^n  $ such that
   $f(t\cdot x) = \theta(t) \cdot f(x)$ for all $x \in \widetilde{X}_v $. This will be
   evident from the subsequent discussion.

The map $\xi_{N(v)} : T_{N(v)} \to T_N$ induces a map $\xi_v:
\widetilde{X}_v \to X_v$ defined by
   $\xi_v([(p,t)]^{\sim_v}) = [(p,\xi_{N(v)}(t)) ]^{\sim}$ on equivalence classes.
    The kernel of $\xi_{N(v)}$,  $G_v = N/N(v)$, is a finite
  subgroup of $T_{N(v)}$ and therefore has a natural smooth, free action on $T_{N(v)}$
  induced by the group operation.  This induces smooth action of $G_v$ on
  $\widetilde{X}_v$. This action is not free in general. Since $T_N \cong T_{N(v)}/G_v $,  $X_v$
 is homeomorphic to the quotient space $\widetilde{X}_v/G_v$.  An orbifold chart
 (or uniformizing system) on $X_v$ is
  given by $(\widetilde{X}_v, G_v, \xi_v)$.


      We  define a homeomorphism
      $\phi(v): \widetilde{X}_v  \to \RR^{2n}$ as follows.
      Assume without loss of generality that $F_1, \ldots, F_n$ are the facets of
      $U_v$. Let the equation of $F_i$ be $p_i(v)= 0$.
     Assume that $p_i(v) > 0$ in the interior of $U_v$ for every $i$.
 Let $\Lambda_{(v)}$ be the corresponding matrix of characteristic
 vectors \begin{equation}\label{lambdav}
 \Lambda_{(v)}= [\lambda_{1} \ldots \lambda_{n} ]. \end{equation}

     If ${\bf q}(v)= (q_1(v),\ldots, q_n(v))^t$ are angular coordinates of an element of $T_N$ with respect
    to the basis $\{ \lambda_1, \ldots, \lambda_n \}$ of $N \otimes \RR$, then  the
     standard coordinates ${\bf q} =(q_1, \ldots, q_n)^t$ may be expressed as
     \begin{equation}\label{chcoord1}
     {\bf q} = \Lambda_{(v)} {\bf q}(v).
     \end{equation}
     Then define the homeomorphism $\phi(v): \widetilde{X}_v \to \RR^{2n}$ by
     \begin{equation}\label{homeo}
      x_i = x_i(v):= \sqrt{p_i (v)} \cos(2 \pi q_i(v) ), \quad
      y_i = y_i(v):= \sqrt{p_i(v) } \sin( 2 \pi q_i(v) ) \quad {\rm for}\;
      i=1,\ldots,n
     \end{equation}
		
		\begin{remark} The square root over $p_i$ is necessary to ensure that the
orbit map  is smooth.
\end{remark}
		
We write
\begin{equation}\label{cplxcoor}
 z_i = x_i + \sqrt{-1} y_i, \quad {\rm and} \quad
z_i(v) = x_i(v) + \sqrt{-1}y_i(v)
\end{equation}

 Now consider the action of $G_v = N/N(v)$ on $\widetilde{X}_v$. An element
     $g$ of $G_v$ is represented by a vector $\sum_{i=1}^n a_i \lambda_i $ in
   $N$ where each $a_i \in  \QQ$.  The action of $g$ transforms the coordinates
   $q_i(v)$ to $q_i(v) + a_i$. Therefore
    \begin{equation}\label{action}
    g\cdot (z_1,\ldots, z_n) = (e^{2\pi \sqrt{-1} a_1} z_1,\ldots, e^{2\pi \sqrt{-1}a_n} z_n).
     \end{equation}

We define
\begin{equation}\label{gx3}
  G_F := ((N(F)\otimes_{\ZZ} \QQ) \cap N) / N(F).
\end{equation}
We denote the space X with the above orbifold structure by
$\bf{X}$.

\subsection{\bf{Invariant suborbifolds}} The $T_N$-invariant subset $X(F) = \pi^{-1}(F)$, where
    $F$ is a face  of $P$, has a natural structure of a quasitoric orbifold \cite{[PS]}.
     This structure is
    obtained by taking $F$ as the polytope for ${\bf X}(F)$ and projecting the characteristic vectors
     to $N/N^{\ast}(F)$ where $N^{\ast}(F)= (N(F) \otimes_{\ZZ} \QQ) \cap N$. With this structure
    ${\bf X}(F)$ is a suborbifold of ${\bf X}$.
    It is called a characteristic suborbifold if $F$ is a facet.
    Suppose $\lambda$ is the characteristic vector attached to the facet $F$. Then
     $\pi^{-1}(F)$ is fixed by the circle subgroup $T(\lambda)$ of $T_N$.
      We denote the relative interior
      of a face $F$ by $F^{\circ}$ and the corresponding invariant space $\pi^{-1}(F^{\circ})$ by $X(F^{\circ})$.
      Note that $v^{\circ}= v$ if $v$ is a vertex.

\subsection{\bf{Orientation}}
       Quasitoric orbifolds are oriented. For  more detailed dicussion see 2.8
			\cite{[GP2]}.
       A choice of orientations for $P \subset \RR^n$
      and $T_N$ induces an orientation for ${\bf X}$.

\subsection{\bf{Omniorientation}}\label{omnio} An omniorientation is a choice of orientation for the
  orbifold as well as an orientation for each characteristic suborbifold.
  At any vertex $v$, the $G_v$-representation ${\mathcal T}_0 \widetilde{X}_v$ splits into the direct
  sum of $ n$  $G_v$-representations corresponding to the normal spaces of $z_i(v)=0$.
  Thus we have a decomposition of the orbifold tangent space
  ${\mathcal T}_v{\bf X}$ as a direct sum of the normal spaces of the
  characteristic suborbifolds that meet at $v$.
   Given an omniorientation, we say that the
  sign of a vertex $v$ is positive if the orientations of
   ${\mathcal T}_v({\bf X})$ determined by the
  orientation of ${\bf X}$ and orientations of characteristic suborbifolds coincide. Otherwise
  we say that sign of $v$ is negative. An omniorientation is then said to be positive
   if each vertex has positive sign.

 It is easy to verify that reversing the sign of any number of characteristic vectors does not
 affect the topology or differentiable structure of the quasitoric orbifold. There is a circle
 action of $T_{\lambda_i}$ on the normal bundle of ${\bf X}(F_i)$ producing a complex structure and
 orientation on it. This action and orientation varies with the sign of $\lambda_i$. Therefore,
 given an orientation on ${\bf X}$, omniorientations correspond bijectively to choices of signs
 for the characteristic vectors. We will assume the standard orientations on $P$ and $T^n$ so that
 omniorientations will be solely determined by signs of characteristic vectors. Also under this
 choice the sign of $v$ equals the sign of $\det(\Lambda_{(v)})$.

\section{\bf{ Blowdowns}}\label{blowdown}
This material has been taken from \cite{[GP2]}.
 Topologically the blowup will correspond to replacing an
 invariant suborbifold by the projectivization of its normal bundle. Combinatorially
 we replace a face by a facet with a new characteristic vector. Suppose $F$ is a face of $P$. We
 choose  a hyperplane $H = \{ \widehat{p}_0 = 0 \}$ such that $\widehat{p}_0$
 is negative on $F$ and $\widehat{P}:=\{\widehat{p}_0 > 0\} \cap P$ is a simple
 polytope having one more facet than $P$. Suppose $F_1, \ldots, F_m$ are the
 facets of $P$. Denote the facets $F_i \cap \widehat{P} $ by $F_i$
 without confusion. Denote the extra facet $H \cap P$ by
 $F_{0}$.

Without loss of generality let $F = \bigcap_{j= 1}^k F_j$.
  Suppose there exists a primitive vector
  $\lambda_{0} \in N$ such that
  \begin{equation}
 \lambda_{0} = \sum_{j= 1}^k b_j \lambda_j, \; b_j > 0 \, \forall
 \, j.
  \end{equation}
 Then the assignment $F_{0} \mapsto \lambda_{0}$
 extends the characteristic function of $P$ to a characteristic function
 $\widehat{\Lambda}$ on $\widehat{P}$. Denote the omnioriented quasitoric
 orbifold derived from the model $(\widehat{P}, \widehat{\Lambda}  )$ by
 ${\bf Y}$.
\begin{defn}
We define  blowdown a map $\rho$ from ${\bf Y} \mapsto {\bf X}$  which is inverse of a blow-up. Such maps have beeen constructed in   \cite{[GP2]}.
\end{defn}

\begin{lemma} (Lemma 4.2  \cite{[GP2]}) If ${\bf X}$ is
positively omnioriented, then so is a blowup ${\bf Y}$.
\end{lemma}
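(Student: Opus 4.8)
The plan is to reduce everything to the sign criterion recorded at the end of Subsection~\ref{omnio}: once the standard orientations on $P$ and $T_N$ are fixed, an omniorientation is determined by the signs of the characteristic vectors, and the sign of a vertex $v$ equals the sign of $\det(\Lambda_{(v)})$, where $\Lambda_{(v)}$ is the matrix \eqref{lambdav} whose columns are ordered according to the orientation of $P$. The blowup ${\bf Y}$ carries the orientation inherited from $\widehat{P}\subset\RR^n$ and $T_N$, and the same characteristic vectors $\lambda_1,\dots,\lambda_m$ together with the new one $\lambda_0$, so ${\bf Y}$ is automatically omnioriented. Hence the only thing to prove is that $\det(\widehat{\Lambda}_{(w)})>0$ at every vertex $w$ of $\widehat{P}$, given that $\det(\Lambda_{(v)})>0$ at every vertex $v$ of $P$.

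First I would sort the vertices of $\widehat{P}$ into two classes. A vertex $w$ of $\widehat{P}$ that is also a vertex of $P$ (equivalently $\widehat{p}_0(w)>0$) has exactly the same facets and characteristic vectors in ${\bf Y}$ as in ${\bf X}$, so $\det(\widehat{\Lambda}_{(w)})=\det(\Lambda_{(w)})>0$ and nothing need be done. The remaining vertices lie on the new facet $F_0$, and these require work. Writing $F=\bigcap_{j=1}^k F_j$ and taking a vertex $v=\bigcap_{i=1}^n F_i$ of $P$ on $F$ (so that $F_1,\dots,F_k$ are the facets through $F$ and $F_{k+1},\dots,F_n$ the other facets at $v$), I would check that $F_0$ meets precisely the $k$ edges $e_j=\bigcap_{i\neq j}F_i$, $j\in\{1,\dots,k\}$, emanating from $v$ into the interior, and so caps off $v$. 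Each such cut produces a new vertex
\[
 w_{v,j}\;=\;F_0\cap F_1\cap\cdots\cap F_{j-1}\cap F_{j+1}\cap\cdots\cap F_n,
\]
and every vertex of $\widehat{P}$ on $F_0$ arises exactly once in this way.

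Next I would identify the orientation-correct characteristic matrix at $w_{v,j}$. Writing the cutting hyperplane as $\widehat{p}_0=\sum_{i=1}^k a_i p_i-c$ with $a_i,c>0$ (so that $\widehat{p}_0<0$ on $F$ and $F_0$ genuinely caps the corner), a short computation with $d\widehat{p}_0=\sum_{i=1}^k a_i\,dp_i$ shows that inserting $d\widehat{p}_0$ into the $j$-th slot of the positively oriented tuple $(dp_1,\dots,dp_n)$ rescales the volume form by $a_j>0$, hence keeps the orientation positive. Therefore the orientation-correct characteristic matrix at $w_{v,j}$ is $\Lambda_{(v)}$ with its $j$-th column $\lambda_j$ replaced by $\lambda_0$. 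Expanding by multilinearity and using $\lambda_0=\sum_{i=1}^k b_i\lambda_i$ with all $b_i>0$, every term with $i\neq j$ has a repeated column and vanishes, leaving
\[
 \det\!\big(\widehat{\Lambda}_{(w_{v,j})}\big)\;=\;b_j\,\det\!\big(\Lambda_{(v)}\big).
\]
Since $b_j>0$ and $\det(\Lambda_{(v)})>0$ by hypothesis, $w_{v,j}$ has positive sign, which completes the check for every vertex of $\widehat{P}$.

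The main obstacle is precisely the orientation bookkeeping of the previous paragraph: one must be certain that the ordering of the facet functions at $w_{v,j}$ dictated by the orientation of $\widehat{P}$ really places $\lambda_0$ in the $j$-th column (any other slot would introduce a sign), and that the positivity $a_j>0$ forced by the combinatorics of the cap $F_0$ is genuinely available. Everything else—the classification of the new vertices and the determinant expansion—is routine once this convention is pinned down.
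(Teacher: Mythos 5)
Your proof is correct and takes essentially the same route as the proof of Lemma 4.2 in \cite{[GP2]} (which this paper cites rather than reproves): the vertices of $\widehat{P}$ off $F_0$ keep their data, each new vertex $w_{v,j}$ on $F_0$ sits on an edge $e_j$ of $P$ through a vertex $v$ of $F$, and multilinearity with $\lambda_0=\sum_{i=1}^k b_i\lambda_i$ gives $\det(\widehat{\Lambda}_{(w_{v,j})})=b_j\det(\Lambda_{(v)})>0$. One minor remark: your normal form $\widehat{p}_0=\sum_{i=1}^k a_i p_i-c$ is not forced in general (the cutting hyperplane may tilt along directions parallel to $F$, so $d\widehat{p}_0$ can have components on $dp_i$ for $i>k$ as well), but your argument only uses the single coefficient $a_j>0$, and that positivity holds in full generality because along $e_j$ all $p_i$ with $i\neq j$ vanish and $\widehat{p}_0$ is affine and increases from $\widehat{p}_0(v)<0$ to a positive value at the crossing point.
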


\begin{defn}\label{crepant}
A blowdown is called crepant if $\sum b_j = 1 $.
\end{defn}

\section{\bf{ Chen-Ruan Cohomology}}\label{crc}
This material has been taken from \cite{[GP2]}.
The Chen-Ruan cohomology group is built out of the ordinary
cohomology of certain copies of singular strata of an orbifold
called twisted sectors. The twisted sectors of orbifold toric
varieties was computed in \cite{[Po]}. The determination of such
sectors for quasitoric orbifolds is similar in essence. Another
important feature of Chen-Ruan cohomology is the grading which is
 rational in general. In our case the grading will  depend on
 the omniorientation.

 Let ${\bf X}$ be an omnioriented
quasitoric orbifold. Consider any element $g$ of the group $G_F $
\eqref{gx3}. Then $g$ may be represented by a vector $\sum_{j \in
\mathcal{I}(F)} a_j \lambda_j $. We may restrict $a_j$ to $[0,1)\cap \QQ$.
Then the above representation is unique. Then define the degree
shifting number or age of $g$ to be
\begin{equation}\label{age}
\iota(g)= \sum a_j.
\end{equation}

For faces $F$ and $H$ of $P$ we write $F \le H$ if $F$ is a
sub-face of $H$, and $F < H$ if it is a proper sub-face. If $F \le H
$ we have a natural inclusion of $G_H$ into $G_F$ induced by the
inclusion of $N(H)$ into $N(F)$. Therefore we may regard $G_H$ as
a subgroup of $G_F$. Define the set
\begin{equation}
G_F^{\circ} = G_F - \bigcup_{F < H} G_H
\end{equation}
Note that $G_F^{\circ} = \{ \sum_{j \in \mathcal{I}(F)} a_j \lambda_j |
0 < a_j < 1 \} \cap N  $, and $G_P^{\circ}= G_P =\{0\}$.

\begin{defn}\label{cr}
 We define the Chen-Ruan orbifold cohomology
 of an omnioriented quasitoric orbifold ${\bf X}$ to be
$$ H^{\ast}_{CR}({\bf X}, \RR ) =
\bigoplus_{F \le P} \bigoplus_{ g\in G_F^{\circ}} H^{\ast - 2
\iota(g)} (X(F), \RR).$$
 Here $H^{\ast}$ refers to singular cohomology or equivalently to
 de Rham cohomology of invariant forms when $X(F)$ is considered
  as the orbifold ${\bf X}(F)$.
 The pairs $(X(F), g)$ where $F<P$  and $ g\in G_F^{\circ}$ are
called twisted sectors of ${\bf X}$. The pair $(X(P),1)$, i.e. the
underlying space $X$, is called the untwisted sector.
\end{defn}
First we introduce some notation. Consider a codimension $k$ face
$F= F_1 \cap \ldots \cap F_k$ of $P$ where $k \ge 1$.
  Define a $k$-dimensional cone $C_F$ in $N\otimes \RR$ as follows,
\begin{equation}\label{cf}
C_F = \{ \sum_{j=1}^k a_j \lambda_j: a_j \ge 0 \}
\end{equation}
 The group $G_F$ can be identified with the subset $Box_F $ of
 $C_F$, where
\begin{equation}\label{boxf}
 Box_F := \{
\sum_{j=1}^k a_j \lambda_j: 0\le a_j <1 \} \cap N.
\end{equation}
Consequently the set $ G_F^{\circ}$ is identified with the subset
\begin{equation}\label{boxfo}
 Box_F^{\circ} :=  \{ \sum_{j=1}^k a_j
\lambda_j: 0 < a_j < 1 \} \cap N \end{equation} of the interior of
$C_F$. We define $Box_P = Box_P^{\circ}= \{0\} $.

 Suppose $v=F_1\cap \ldots \cap F_n$ is a vertex of $P$. Then
$Box_v =  \bigsqcup_{v\le F} Box_{F}^{\circ} $. This implies
\begin{equation}\label{Gdecom}
G_v = \bigsqcup_{v \le F} G_F^{\circ}
\end{equation}
An almost complex orbifold is $SL$ if the linearization of each
$g$ is in $SL(n,\CC)$. This is equivalent to $\iota(g)$ being
integral for every twisted sector.
 Therefore, to suit our purposes, we make the following definition.

\begin{defn}\label{quasisl}
 A quasitoric
orbifold is said to be quasi-$SL$ if the age of every twisted
sector is an integer.
\end{defn}

\begin{lemma} (Lemma 8.2  \cite{[GP2]}) The crepant blowup of a quasi-$SL$ quasitoric orbifold is
quasi-$SL$.
\end{lemma}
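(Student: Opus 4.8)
The plan is to verify directly that every twisted sector of ${\bf Y}$ has integral age, using the quasi-$SL$ property of ${\bf X}$. Since any box element of ${\bf Y}$ lies in $Box_{\hat v}$ for some vertex $\hat v$ of $\widehat P$, it is enough to check integrality of $\iota$ on each vertex box. The vertices of $\widehat P$ split into two types. The vertices of $P$ not contained in $F$ survive unchanged: their characteristic sets, cones, and hence their boxes and all associated ages coincide with those in ${\bf X}$, so these ages are integers by hypothesis. Thus the whole problem concentrates on the new vertices lying on the added facet $F_0$.

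Next I would make the star subdivision explicit. Fix a vertex $v=F_1\cap\dots\cap F_n$ of $F=F_1\cap\dots\cap F_k$, so that $\{1,\dots,k\}\subseteq\mathcal{I}(v)$ and $\lambda_1,\dots,\lambda_n$ is a basis of $N\otimes\RR$. Adding $\lambda_0$ subdivides the cone $\mathrm{cone}(\lambda_1,\dots,\lambda_n)$ into the $k$ simplicial cones $\mathrm{cone}(\lambda_0,\lambda_1,\dots,\widehat{\lambda_j},\dots,\lambda_n)$ for $j\in\{1,\dots,k\}$; these are the cones of the new vertices $\hat v_j$ of $\widehat P$, and they are genuine simplicial cones precisely because $b_j>0$ (which is why $j>k$ is excluded). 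A typical element of $Box_{\hat v_j}$ has the form $g=a_0\lambda_0+\sum_{i\ne j}a_i\lambda_i$ with $0\le a_0,a_i<1$, and its age is $\iota(g)=a_0+\sum_{i\ne j}a_i$.

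The heart of the argument is the following use of the crepant condition $\sum_{l=1}^k b_l=1$. Substituting $\lambda_0=\sum_{l=1}^k b_l\lambda_l$ expresses $g$ in the basis $\{\lambda_1,\dots,\lambda_n\}$ as $g=\sum_i c_i\lambda_i$, and summing the coefficients while using $\sum_l b_l=1$ gives $\sum_i c_i=a_0+\sum_{i\ne j}a_i=\iota(g)$: the total of the coordinates is exactly the age, even after rewriting. Because $g\in N$, reducing each rational coordinate modulo $1$ yields $\bar g=\sum_i(c_i-\lfloor c_i\rfloor)\lambda_i$ with $g-\bar g\in N(v)$, so that $\bar g\in Box_v$ is a box element of ${\bf X}$. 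Its age $\sum_i(c_i-\lfloor c_i\rfloor)$ is an integer by the quasi-$SL$ hypothesis on ${\bf X}$, and hence $\iota(g)=\sum_i c_i=\sum_i(c_i-\lfloor c_i\rfloor)+\sum_i\lfloor c_i\rfloor$ is an integer as well, which is what we want.

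Conceptually, the whole computation is the identity $\iota(\,\cdot\,)=\langle\,\cdot\,,m_v\rangle$ for the functional $m_v$ dual to $\lambda_1,\dots,\lambda_n$ with $\langle\lambda_i,m_v\rangle=1$ for $i\le k$ and $0$ for $i>k$; the crepant condition is exactly what makes $\langle\lambda_0,m_v\rangle=1$, so that $m_v$ still reads off the age on the subdivided cones. I expect the main obstacle to be combinatorial rather than computational: one must confirm carefully that the new vertices of $\widehat P$ are precisely the $\hat v_j$ with the stated cones, and keep honest track of the transverse coordinates $i>k$ (the facets not containing $F$), which the subdivision leaves untouched and which must therefore contribute identically to $\iota(g)$ and to $\sum_i c_i$. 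Once this bookkeeping is in place the age computation is routine.
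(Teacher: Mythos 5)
Your argument is correct and is essentially the argument behind the cited result: this paper states the lemma without proof (quoting Lemma 8.2 of \cite{[GP2]}), and the standard proof, like yours, reduces to vertex boxes, identifies the new vertices $\hat v_j$ ($j \le k$) of the star subdivision, writes $g = a_0\lambda_0 + \sum_{i\ne j} a_i\lambda_i$, substitutes $\lambda_0 = \sum_{l=1}^k b_l\lambda_l$ so that crepancy $\sum b_l = 1$ preserves the coefficient sum, and reduces modulo $N(v)$ to a box element of ${\bf X}$ whose age is integral by hypothesis; equivalently, Section 4 of this paper already shows that in the quasi-SL case the functional $m_v$ dual to $\lambda_1,\dots,\lambda_n$ is integral on $N$, giving the one-line version $\iota(g)=\langle g, m_v\rangle \in \ZZ$. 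One slip in your closing aside only: that functional must take the value $1$ on \emph{all} $\lambda_i$, $i=1,\dots,n$ (as the paper's $m_v$ does), not just for $i\le k$ --- your version reads off only the partial age in the $F$-directions --- but this is a heuristic remark and your rigorous computation in the third paragraph is unaffected.
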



\section{\bf{Correspondence of Betti numbers}}\label{Mckay}

\subsection{Singularity and lattice polyhedron}
Following the discussions in sections $\ref{crc}$ , a singularity of a
face F is defined by a cone $ C_{F}$ formed by positive linear
combinations of vectors in its characteristic  set ${\lambda_{1},\ldots
,\lambda_{d}}$ where  d is the codimension of the face in the
polytope. The elements of the  local  group $ G_F$ are of the form
$g =diag(e^{2\pi \sqrt{-1} \alpha_{1}},\ldots,e^{2\pi \sqrt{-1}
\alpha_{d}}),$  where $ \sum _{i=1}^{d}\alpha_{i}\lambda_{i} \in
N $, and $ 0 \leq \alpha_{i} <1 $. Recall that the age
\begin{equation}
\iota(g) = \alpha_{1} + \ldots + \alpha_{d}
\end{equation}
is integral in quasi-$SL$ case  by definition $\ref{quasisl}$.

The singularity along the interior of $F$ is of the form $\CC^d/G_F$.
These singularities  are same as Gorenstein toric quotient singularities in complex  algebraic  geometry. Now let $N_{v}$ be the  lattice formed by
 $\{\lambda_{1},\ldots ,\lambda_{n}\}$, the characteristic vectors at a vertex $v$  contained in the face $F$. Let $m_{v}$  be the element in the dual lattice of
$N_v$ such that its  evaluation on each $\lambda_{i}$ is one.
 Now from  Lemma 9.2  of \cite{[CP]} we know that the cone $ C_{v}$ contains an integral basis, say $e_1,\ldots,e_n$. Suppose $e_i= \sum a_{ij} \lambda_j$. By \eqref{boxf}
$e_i$ corresponds to an element of $G_v$,
and since the singularity is qausi-$SL$, $\sum a_{ij}$ is integral.
Hence $m_{v}$ evaluated on each $e_j$ is integral. So $m_{v}$ an element of the dual
of the  integral lattice $ N$.

 Consider the $(n-1)$-dimensional lattice polyhedron  $\Delta_v$ defined as  $\{x \in C_v\mid \<$ x $,m_v \>= 1\}$. Note that $\Delta_v =\{ \sum_{i=1}^n a_i \lambda_i \mid a_i \ge 0, \; \sum a_i = 1 \}$.
  For any face $F$  containing $v$  we define $\Delta_F = \Delta_v \cap C_F$.
	If $\{\lambda_i, \ldots, \lambda_d \} $ denote the chracteristic set of $F$, then
	$\Delta_F =\{ \sum_{i=1}^d a_i \lambda_i \mid a_i \ge 0, \; \sum a_i = 1 \} $.
	Hence $\Delta_F$ is independent of the choice of $v$.



 \begin{remark}
 An element  $g \in G$ of an $SL$ orbifold  singularity can be diagonalized to the form g$=diag(e^{2\pi \sqrt{-1} \alpha_{1}},\ldots,e^{2\pi \sqrt{-1} \alpha_{d}}),$ where $0 \leq  \alpha_i < 1$ and
$\iota(g)= \alpha_{1} + \ldots +\alpha_{d}$ is integral.
\end{remark}

We make some definitions following \cite{[BT]}.
\begin{defn}
Let $G$ be a finite subgroup of $SL(d,\mathbb{C})$.  Denote  by $ \psi_ {i}(G)$ the
number of the conjugacy classes of $G$ having  $\iota(g)=i$. Define
\begin{equation}\label{lat1}
 W(G;uv)=\psi_ {0}(G)  + \psi_ {1}(G)uv + \ldots +\psi_ {d-1}(G) {(uv)} ^{d-1}
 \end{equation}
 \end{defn}

\begin{defn}\label{ht}
 We   define height(g) = rank(g-I)
\end{defn}

\begin{defn}
  Let $G$ be a finite subgroup of $SL(d,\mathbb{C})$. Denote by $\tilde{\psi_ {i}}(G)$
   the number of the conjugacy classes of $G$ having the $height =d$  and $\iota(g)=i$.
 \begin{equation}\label{lat2}
 \widetilde{W}(G;uv)=\tilde{\psi}_{0}(G)  + \tilde{\psi}_{1}(G)uv + \ldots  + \tilde{\psi}_{d-1}(G) {(uv)} ^{d-1}
 \end{equation}
 \end{defn}

\begin{defn}
For a lattice polyhedron  $\Delta_F$ defining a $SL$ singularity $\CC^d/ G_F$, we define the following:
\begin{equation}
W(\Delta_F; uv) =W(G_F; uv)
\end{equation}
\begin{equation}\label{psi}
\psi_ {i}(\Delta_F)=\psi_ {i}(G_F)
\end{equation}
\begin{equation}\label{psi1}
\widetilde{W}(\Delta_F; uv) =\widetilde{W}(G_F; uv)
\end{equation}
\begin{equation}\label{tpsi}
\tilde{\psi}_{i}(\Delta_F)=\tilde{\psi}_{i}(G_F)
\end{equation}
\end{defn}
\begin{defn} A finite collection $\tau=\{ \theta \}$ of simplices with vertices in $ \Delta_F \cap N$  is called a triangulation of $ \Delta_F$ if the following properties
are satisfied.
\begin{enumerate}
\item If $\theta^{\prime}$ is a face of $\theta \in \tau$ then $\theta ^{ \prime} \in \tau$
\item The intersection of any two simplices $ \theta ^{\prime}$, $ \theta ^{\prime \prime} \in \tau$ is either empty, or a common
face of both of them;
\item $\Delta_F = \cup_{\theta \in \tau } \theta $
\end{enumerate}
\end{defn}


\subsection{Blowdown and triangulation of polyhedron}
A crepant blowup gives rise to triangulation of the polyhedrons
corresponding to some of the faces. Suppose we blow up about a
face $F$. Then it is clear that new characteristic vector is an
integral vector lying in the interior  of the polyhedron $
\Delta_F$. Note that $ \Delta_F$ is a simplex. The
crepant blow up induces a barycentric subdivision of  $ \Delta_F$
with the new characteristic vector as barycenter. We denote this
triangulation of $ \Delta_F$  by $\tau_F$. For the  faces
$F^{\prime}$ contained in F,  $ \Delta_{F^{\prime}}$  is
triangulated  as follows. Let
  $K_{F^{\prime}}=\lambda_{F^{\prime}}-\lambda_{F}$ be  difference of two characteristic sets. The triangulation $\tau_{F^{\prime}}$ consists of  simplices with vertex set of the form
    $\theta \cup \beta$ where $ \theta$ are the vertices of a  simplex of $\tau_F $ and $\beta \subset K_{F^{\prime}}$. To see that this process takes  care of all the faces lost and created we make the following comments. First of all the faces lost are $F$ and its subfaces. This means there will be no simplex with vertex set having $\lambda_{F}$ as a subset. This is exactly what happens here. The new faces created are subfaces of the intersection of new facet (created by the blowup)  with faces having as vertex one of the vertices  of  $F$. These  faces intersected F prior to the blow up in  some $F^{\prime}$ and so the new faces  formed correspond to the simplices with vertex set that are subset of the union $\theta \cup \beta$ discussed above.  
\subsection{\bf{ E-polynomial }}
 The following has been taken from the paper of Batyrev and Dais \cite{[BT]}.
 Let $X$ be an algebraic variety  over $ \mathbb{C}$ which is not necessarily compact
or smooth. Denote by $ h^{p,q}(H_c^{k}(X))$ the dimension of the $(p,q)$ Hodge component
of the $k$-th cohomology with compact supports. We define
\begin{equation}
  e^{p,q}(X)=\Sigma _{k \ge 0 } (-1)^{k} h^{p,q}(H_c^{k}(X)).
  \end{equation}
 The polynomial
 \begin{equation}
 E(X; u, v) := \Sigma _{p,q}  e^{p,q}(X) u^{p}v^{q}
\end{equation}
is called $E$-polynomial of X.

 \begin{remark}\label{epq}
 If the Hodge structure is pure, for example  in the case of   smooth projective toric varieties, then the coefficients $ e^{p,q}(X)$
of the $E$-polynomial of X are related to the usual Hodge numbers by $e^{p,q}(X) =
(-1)^{p+q}h^{p,q}(X)$
\end{remark}
 We state the following theorem  without proof.
 \begin{theorem}\label{strata}(Proposition 3.4 in \cite{[BT]}) Let X be a disjoint
union of locally closed subvarieties $X_j$, $j \in J$, where $J \subset \mathbb{N}$.
Then
\begin{equation}\label{strat1}
E(X;u,v)= \Sigma _{j \in J } E(X_ j;u,v)
\end{equation}
\end{theorem}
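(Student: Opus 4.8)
The plan is to establish the additivity in two stages: first the fundamental two-term case, in which $X$ is split into a closed subvariety and its open complement, and then the general finite stratification by induction on dimension. The entire weight of the argument rests on the two-term case, whose proof combines the long exact sequence for cohomology with compact supports with Deligne's theory of mixed Hodge structures.

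For the two-term case, let $Z \subseteq X$ be closed with open complement $U = X \setminus Z$. I would invoke the long exact sequence
\[
\cdots \to H^{k}_{c}(U) \to H^{k}_{c}(X) \to H^{k}_{c}(Z) \to H^{k+1}_{c}(U) \to \cdots .
\]
The crucial point is that each $H^{k}_{c}$ of a complex variety carries a canonical mixed Hodge structure (Deligne), and that every arrow in this sequence, including the connecting homomorphisms, is a morphism of mixed Hodge structures. Since morphisms of mixed Hodge structures are strictly compatible with both the weight and the Hodge filtrations, the functor $\mathrm{Gr}^{p}_{F}\mathrm{Gr}^{W}_{p+q}$ is exact, and applying it to the sequence above produces, for each fixed pair $(p,q)$, a long exact sequence of finite-dimensional $\CC$-vector spaces. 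In any such sequence the alternating sum of dimensions vanishes; bookkeeping the degree shift in the connecting map together with the sign $(-1)^{k}$ in the definition of $e^{p,q}$, the three $(p,q)$-pieces organize into
\[
e^{p,q}(X) = e^{p,q}(U) + e^{p,q}(Z).
\]
Multiplying by $u^{p}v^{q}$ and summing over $p$ and $q$ yields the two-term additivity $E(X;u,v) = E(U;u,v) + E(Z;u,v)$.

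Finally I would treat an arbitrary finite partition $X = \bigsqcup_{j \in J} X_{j}$ into locally closed pieces by induction on $\dim X$, the case $\dim X = 0$ being a finite set of points. Let $D = \bigcup_{j}(\overline{X_{j}} \setminus X_{j})$, a closed subvariety with $\dim D < \dim X$, and let $U = X \setminus D$. On $U$ every $X_{j}\cap U$ is simultaneously open and closed, so $H^{\ast}_{c}(U)$ splits as a finite direct sum over $j$ and hence $E(U;u,v) = \sum_{j} E(X_{j}\cap U;u,v)$. The two-term case gives $E(X;u,v) = E(D;u,v) + E(U;u,v)$ and, for each $j$, $E(X_{j};u,v) = E(X_{j}\cap U;u,v) + E(X_{j}\cap D;u,v)$, because $X_{j}\cap D$ is closed in $X_{j}$. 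Applying the inductive hypothesis to $D$ with the induced partition $\{X_{j}\cap D\}$ and combining these identities yields $E(X;u,v) = \sum_{j\in J} E(X_{j};u,v)$.

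The main obstacle, and the only nonformal input, is the assertion that the compactly supported long exact sequence is an exact sequence of mixed Hodge structures with \emph{strict} morphisms; this is precisely Deligne's theorem, and once it is granted everything else reduces to Euler-characteristic bookkeeping and elementary topology of stratifications.
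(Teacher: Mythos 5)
Your proof is correct, but note that the paper itself offers no argument to compare against: it explicitly states this result without proof, quoting Proposition 3.4 of Batyrev--Dais \cite{[BT]}. What you have written is precisely the standard argument underlying that citation (going back to Danilov--Khovanskii): Deligne's mixed Hodge structure on $H^k_c$, strictness of morphisms of mixed Hodge structures making $\mathrm{Gr}^p_F\mathrm{Gr}^W_{p+q}$ exact on the compactly supported long exact sequence of the pair $(X,Z)$, and then Euler-characteristic bookkeeping plus the clopen splitting over $U = X \setminus \bigcup_j (\overline{X_j}\setminus X_j)$ and induction on dimension. All the steps check out: $X_j \cap U = \overline{X_j}\cap U$ is closed in $U$, hence each piece is clopen in $U$; and $\dim D < \dim X$ because each $X_j$ is open and dense in $\overline{X_j}$. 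The one point worth flagging is that the statement as printed allows $J \subset \mathbb{N}$ to be infinite, in which case the right-hand side need not even be a polynomial (e.g.\ $\mathbb{A}^1$ partitioned into its points); your proof quietly, and correctly, assumes $J$ finite, which is the intended reading and is all the paper ever uses, since its stratifications are indexed by the finitely many faces of a polytope.
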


\subsection{\bf{ Ehrhart power series}} Let $\Delta$ be a lattice polyhedron and $k\Delta:= \{ kx \mid x\in \Delta \}$.
Let $l( k\Delta)$ be the number of  lattice points of $k\Delta$. Then the Ehrhart power series
\begin{equation}\label{lati}
P_{\Delta}(t)= \Sigma_{ (k \geq 0)} l(k\Delta) t^{k}
\end{equation}

\begin{defn}
Let $\Delta_F$ be a (d-1) dimensional  lattice polyhedron defining a d-dimensional  toric  singularity . It is well-known (see, for instance, \cite{[BT]}, Theorem 5.4)  that $P_{\Delta_F}(t)$ can  be written in the form,
\begin{equation}\label{lati1}
P_{\Delta_F}(t) =\frac{\psi_ {0}(\Delta_F)  + \psi_ {1}(\Delta_F)t + \ldots + \psi_ {d-1}(\Delta_F) t ^{d-1}} {(1-t)^{d}}
\end{equation}
where $ \psi_ {0}(\Delta_F), \ldots, \psi_ {d-1}(\Delta_F)$ are  non-negative integers defined in  $\eqref{psi}$.
\end{defn}
\subsection{\bf{ More on quasi-SL orbifolds } }
 Let ${\bf X}$ be a compact quasi-SL  $2n$-dimensional quasitoric orbifold. Let Sing($X$) be the set of singular points of $X$. Consider the set $ I=\{i \in  \mathbb{N}\mid i \leq$ number of faces in the polytope of $X \}$. We can index the set of faces by the set $ I$.   Call the inverse image of the  interior of the face $F_i$ as $X_{i}$. It can be easily seen that this gives a stratification of the
 orbifold where  each stratum $X_i$ is diffeomorphic to a complex torus.


It is easily seen that
 \begin{equation}\label{morestrat}
 W(\Delta_{F_{i}},uv)= \Sigma_{X_ {j} \geq  X_ {i}}  \widetilde{W}(\Delta_{F_{j}},uv)
 \end{equation}
 where $X_ {j} \geq X_ {i}$ if $\bar{X_ {j}} \supset X_i$. The above result is true because the coefficient of each term in the left hand side can  be broken in to ones
 with different heights (see definition $\eqref{ht}$, equations $\eqref{psi1},\eqref{lat2}$ and$\eqref{tpsi}$. The ones with height equal to the codimension of $X_ {i}$ contribute to  $ \widetilde{W}( \Delta_{F_{i}},uv))$. These come from $G_{F_i}^{\circ}$. Use the decomposition
$G_{F_i} = \bigsqcup_{F_j \supseteq F_i} G_{F_j}^{\circ}$ to observe
 that terms  with lesser heights correspond to  higher $X_ {j}$.

\subsection{Poincar\'e Polynomial} Recall that
\begin{equation}\label{CR}
H^{\ast}_{CR}({\bf X}, \RR ) = \bigoplus_{F \le P} \bigoplus_{
g\in G_F^{\circ}} H^{\ast - 2 \iota(g)} (X(F), \RR)
\end{equation}
 where  $X(F)$ is  the inverse image of the face $F$.

 \begin{defn}
 The Poincar\'e polynomial of a cohomology of $X$ is a polynomial $P(X)(t)$ where the coefficient of  $t^d$ is the rank of the degree $d$ cohomology group.
  We denote by $PP(X)(v)$ the Poincar\'e polynomial of the ordinary singular cohomology and $PP_{CR}(X)(v)$ as the Poincar\'e polynomial of the Chen-Ruan cohomology of ${\bf X}$.
 \end{defn}
 Now if $X$ is a projective toric orbifold, it has  pure Hodge structure. Since the Zariski closure of the $ X_{i}$ are the  suborbifolds corresponding to the faces, from $\eqref{CR}$, $\eqref{psi1}$ and $\eqref{lat2}$, we have
 \begin{equation}\label{CR1}
PP_{CR}(X) (v)=
 \Sigma_{i \in I} PP(\bar{ X_{i}})(v)\widetilde{W}(\Delta_{F_{i}},v^{2}).
 \end{equation}

\subsection{\bf{Correspondence in quasitoric orbifolds}}
 Take a quasi-SL quasitoric orbifold ${\bf X}$.  A slight perturbation makes the polytope $P$ associated with the orbifold into a rational ploytope (see section 5.1.3.in \cite{[BP]}),  and with suitable dilations make it into an
 integral polytope $P'$ which is combinatorially equivalent to $P$. From the normal fan of $P'$ we get a projective toric orbifold
  $X'$ whose polytope is $P'$. (The orbifold structure of $X'$ is determined by its analytic structure and we may conveniently refrain
	from using bold-face notation.) Putting $ u=v$ in Theorem $\eqref{strata}$ we have
 \begin{equation}\label{stratification}
 E( X';v,v)=\Sigma_{i \in I } E(X'_i;v,v)
 \end{equation}
In  the left hand side   the  coefficient of $ v^{k}$  is the sum of $ e^{p,q}(X')$ where $p+q=k$. Since $X'$ is Kahler the Hodge structure is pure and  from remark  $\eqref{epq}$ it follows  $e^{p,q}(X')= (-1)^{p+q}h^{p,q}(X')$. Since  toric orbifolds  (see section 4 of \cite{[PS]}) have zero odd cohomology only the coefficient of  $ v^{2k}$  terms are nonzero.  By Baily's Hodge decomposition (see \cite{[WB]}), the Hodge numbers $h^{p,q}$ for
$p+q= 2k$ add up to the $2k$-th Betti number of singular cohomology group.
So the left hand side is the Poincar\'e polynomial of the ordinary cohomology, giving
 \begin{equation}
  PP( X')(v)=\Sigma _{i \in I } E(X'_i;v,v).
  \end{equation}
    It is known from section 4 of \cite{[PS]} that the Betti numbers  depend  on the combinatorial equivalence  class of the polytope $ P'$.
		As $P'$  is combinatorially equivalent  to $P$, the left hand side equals the Poincar\'e polynomial of the quasitoric orbifold $\bf{X}$.
		The right
 hand side is a  sum of $E$-polynomials of a number of tori. Since the number of tori of each dimension is the same by combinatorial equivalence of the polytopes, we have,

 \begin{equation}\label{newstratification}
PP(X)(v)=\Sigma _{i \in I } E(X_i,v,v)
\end{equation}
where $ \bigsqcup X_i$  is the stratification by tori of the quasitoric orbifold $\bf{X}$. Now from $\ref{CR}$  we get,
\begin{equation}\label{newpon_-1}
PP_{CR}(X)(v)=\Sigma _{i \in I } PP(\bar{ X_{i}})(v) \widetilde{W}( \Delta_{F_{i}},v^{2})
\end{equation}
 Using  $\ref{newstratification}$ we have
\begin{equation}\label{newpon_0}
PP_{CR}(X)(v)=\Sigma _{i \in I } \Sigma _{X_j \leq {X}_i}E(X_ {j},v,v) \widetilde{W}( \Delta_{F_{i}},v^{2})
 \end{equation}
Interchanging the order of summation, and using  $\ref{morestrat}$ we have
\begin{equation}\label{newpon}
 PP_{CR}(X)(v)=\Sigma _{j \in I}E(X_{j},v,v)W(\Delta_{F_{j}},v^{2})
\end{equation}

\begin{theorem}
Suppose ${\bf X}$ is a  quasi-SL quasitoric orbifold, and  $\hat{{\bf X}}$ a  crepant blowup. Then
 \begin{equation}
 PP_{CR}(X)(v) = PP_{CR}(\hat{X})(v))
  \end{equation}
\end{theorem}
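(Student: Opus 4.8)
The plan is to put the master formula \eqref{newpon} into closed form and then read off the invariance directly. I would begin with two observations about the summands. Each stratum is a complex torus, so the $E$-polynomial of the stratum over a face $F$ depends only on its dimension: $E(X_{F},v,v)=(v^{2}-1)^{\dim F}$. And for a proper face $F$ of codimension $d\ge 1$ the Ehrhart presentation \eqref{lati1} gives $W(\Delta_{F},v^{2})=(1-v^{2})^{d}P_{\Delta_{F}}(v^{2})$. Feeding both into \eqref{newpon} and using the identity $(v^{2}-1)^{n-d}(1-v^{2})^{d}=(-1)^{d}(v^{2}-1)^{n}$, the untwisted sector contributes $(v^{2}-1)^{n}$ while each proper face contributes $(-1)^{\operatorname{codim}F}(v^{2}-1)^{n}P_{\Delta_{F}}(v^{2})$, so that
\begin{equation}
PP_{CR}(X)(v)=(v^{2}-1)^{n}\Big[1+\sum_{F<P}(-1)^{\operatorname{codim}F}P_{\Delta_{F}}(v^{2})\Big].
\end{equation}

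Next I would collapse the bracket by inclusion--exclusion on lattice points. Let $|\Delta|:=\bigcup_{v}\Delta_{v}$ be the height-one cross-section of the cone complex $\{C_{F}\}$. Fixing a dilation $k\ge 1$, every lattice point $x\in k|\Delta|\cap N$ lies in $k\Delta_{F}$ for exactly those faces $F$ that are subfaces of the minimal face $F_{\min}(x)$ whose characteristic set is the support of $x$; since $\sum_{F\le G}(-1)^{\dim F}=1$ for every polytope $G$, the alternating sum $\sum_{F\le F_{\min}(x)}(-1)^{\operatorname{codim}F}$ equals $(-1)^{n}$, independently of $x$. Summing over $x$, together with the $k=0$ term, cancels the extra constants and yields the closed form
\begin{equation}\label{clform}
PP_{CR}(X)(v)=(1-v^{2})^{n}P_{|\Delta|}(v^{2}),
\end{equation}
from which the triangulation of $|\Delta|$ into the simplices $\Delta_{v}$ has disappeared entirely.

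With \eqref{clform} in hand the theorem is immediate. A crepant blowup in the sense of Definition \ref{crepant} adjoins the primitive vector $\lambda_{0}=\sum_{j=1}^{k}b_{j}\lambda_{j}$ with $\sum_{j}b_{j}=1$; because the coefficients sum to one, $\lambda_{0}$ sits in the relative interior of the simplex $\Delta_{F}$ at lattice height exactly one, and the induced operation on the fan is the star subdivision at the ray through $\lambda_{0}$. This subdivision refines the tiling of $|\Delta|$ by the pieces $\Delta_{v}$ --- precisely the refinement recorded by the triangulations $\tau_{F'}$ --- but leaves $|\Delta|$ unchanged as a subset of $N\otimes\RR$. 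Hence $P_{|\widehat{\Delta}|}=P_{|\Delta|}$, and \eqref{clform} forces $PP_{CR}(\widehat{X})(v)=PP_{CR}(X)(v)$.

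The crux is the inclusion--exclusion step. It requires that $\{C_{F}\}$ genuinely form a simplicial fan, so that each lattice point determines a unique minimal face $F_{\min}(x)$; this is where I would invoke the projective model used in Section \ref{Mckay}. It also requires the geometric verification that the crepant star subdivision neither enlarges nor shrinks the cross-section $|\Delta|$ --- and this is exactly the point at which crepancy, $\sum b_{j}=1$, is indispensable, since a non-crepant $\lambda_{0}$ would leave the height-one slice and alter the Ehrhart series. The remaining combinatorial bookkeeping, that the only faces altered by the blowup are the subfaces of $F$ and the new faces meeting $F_{0}$ while all other faces retain their polyhedron $\Delta_{F}$ and their dimension, is delicate but is already laid out in the triangulation discussion above.
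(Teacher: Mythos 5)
There is a genuine gap, and it sits exactly where you placed your trust: the global inclusion--exclusion step. Your closed form $PP_{CR}(X)(v)=(1-v^{2})^{n}P_{|\Delta|}(v^{2})$ requires every lattice point of $\bigcup_{v}k\Delta_{v}$ to have a \emph{unique} minimal face $F_{\min}(x)$, i.e.\ it requires the cones $\{C_{F}\}$ built from the actual characteristic vectors of ${\bf X}$ to form a genuine simplicial fan. For a general quasitoric orbifold this is false: the characteristic function is only required to be linearly independent at each face, and the resulting cones form a \emph{multi-fan} whose top-dimensional cones may overlap and cover $N\otimes\RR$ with multiplicity. A lattice point can then lie in $C_{F}$ and $C_{F'}$ with neither $F\le F'$ nor $F'\le F$, the set-theoretic union $|\Delta|=\bigcup_{v}\Delta_{v}$ undercounts (or garbles) the Ehrhart data, and the per-point identity $\sum_{F\le F_{\min}(x)}(-1)^{\mathrm{codim}\,F}=(-1)^{n}$ has no meaning. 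Your proposed rescue --- ``invoke the projective model used in Section \ref{Mckay}'' --- does not work: the projective orbifold $X'$ is obtained from the normal fan of a perturbed polytope $P'$ and carries \emph{different} characteristic vectors, hence different local groups $G_{F}$, different ages, and different polyhedra $\Delta_{F}$. The paper uses $X'$ only to prove \eqref{newstratification}, an identity about ordinary Betti numbers that depends solely on the combinatorial type of $P$; none of the twisted-sector data transfers. Since the whole point of the theorem is the non-toric (multi-fan) case --- the fan case is Batyrev--Dais --- the gap is not cosmetic.

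The repair is instructive, because it collapses your global argument back onto the paper's local one. What survives in the multi-fan setting is the formal alternating sum $\sum_{F<P}(-1)^{\mathrm{codim}\,F}P_{\Delta_{F}}(v^{2})$ (equivalently $\sum_{i}E(X_{i};v,v)W(\Delta_{F_{i}};v^{2})$ from \eqref{newpon}), and one must show directly that it is unchanged by the crepant subdivision. Each individual $\Delta_{F}$ \emph{is} an honest simplex, so Ehrhart inclusion--exclusion is valid face by face; this yields exactly the paper's key identity \eqref{wdelta},
\begin{equation*}
W(\Delta_{F_{i}};v^{2})=\sum_{\theta\in\tau_{i}}(v^{2}-1)^{\mathrm{codim}(\theta)}\,W(\theta;v^{2}),
\end{equation*}
which, combined with the stratification of $\rho^{-1}(X_{i})$ by $X_{i}\times(\CC^{*})^{\mathrm{codim}(\theta)}$, proves the theorem without ever forming the union $|\Delta|$. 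Your crepancy observation (that $\lambda_{0}$ lies at height one, so the star subdivision stays in the slice $\<x,m_{v}\>=1$) is correct and is indeed where $\sum b_{j}=1$ enters in the paper as well; but it must be deployed inside each simplex $\Delta_{F_{i}}$, not on a global cross-section that does not exist for ${\bf X}$.
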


 \begin{proof}

 Let $\rho: \hat{X}\rightarrow X$ be a  crepant blowdown. We set $ \hat{X_{i}}:=\rho^{-1}(X_{i})$. Then $\hat{X_{i}}$ has a natural
 stratification by products $ X_{i} \times ({(\mathbb{C}^{\ast })}^{codim(\theta)} )$ induced by the triangulation,
 \begin{equation}\label{triangulfi}
 \Delta_{F_{i}} =\cup_{\theta \in \tau_{i}} \theta
 \end{equation}
 where $\tau_{i}$ consists of all simplices  which intersect the interior of $ \Delta_{F_{i}}$,
 and $codim(\theta) $ denotes the codimension of $\theta$ in $\Delta_{F_i}$.


Note that the $E$-polyinomial of  a $k$-dimensional  complex torus is
$ (v^{2}-1)^{k}$.

From $\eqref{lati1}$ we have
	\begin{equation}
	 W(\Delta_{F_{i}}; v^{2}) = P_{\Delta_{F_{i}}}(v^{2}) (1-v^{2})^{d} \end{equation}
	where $d$ is the dimension of the face $ F_i$.
	Consider the triangulation \eqref{triangulfi} of $\Delta_{F_i}$.
	By counting lattice points using $\eqref{lati1}$ and applying the inclusion exclusion principle
	we have
	\begin{equation}
  P_{\Delta_{F_{i}}}(v^{2}) = \sum_{\theta \in \tau_i} {(-1)}^{codim(\theta)}P_{\theta}(v^{2}) =
	\sum_{\theta \in \tau_i} {(-1)}^{codim(\theta)} W (\theta,v^{2}) (1- v^2)^{-dim(\theta)}
	\end{equation}
	
	Multiplying both sides by $(1-v^2)^d$, we obtain
	 \begin{equation}\label{wdelta}
 W(\Delta_{F_{i}}; v^{2}) = \Sigma_{\theta \in \tau_{i}} ( v^{2}-1)^{codim(\theta)} W (\theta,v^{2})
  \end{equation}


	

 Since we are dealing with simplices $\theta $ which
	intersect the interior of $ \Delta_{F_{i}}$ each stratum of $\widehat{X}$ is counted once.
	This is because each stratum corresponds to the interior of a face and for each face we have a simplex and it will
	lie in the interior of exactly one of the original (pre-triangulation) polyhedrons.
 Thus the equation $\eqref{newpon}$ applied to $\widehat{X}$ gives
 \begin{equation}
PP_{CR}(\hat{X})(v)=\Sigma_{i\in I}E(X_{i};v,v) \Sigma_{\theta \in \tau_{i}}(v^{2}-1)^{codim(\theta)}W(\theta ;v^{2})
\end{equation}

Now using \eqref{wdelta}
  \begin{equation}
   PP_{CR}(\hat{X})(v)) = \Sigma_{i\in I}E(X_{i};v,v)  W(\Delta_{F_{i}}; v^{2}) = PP_{CR}(X)(v)
  \end{equation}
\end{proof}


\end{document}